\newtheorem{theorem}{\bf Theorem}
\newtheorem{lemma}[theorem]{\bf Lemma}
\newtheorem{proposition}[theorem]{\bf Proposition}
\newtheorem{cor}[theorem]{\bf Corollary}
\newtheorem{ex}{\bf Example}
\newtheorem{rem}{\bf Remark}
\tikzstyle{vertex}=[circle, draw, inner sep=1pt, minimum size=4pt]
\tikzstyle{ann} = [fill=white,font=\footnotesize,inner sep=1pt]
\tikzstyle{arrow} = [thick,<-->,>=stealth]
\begin{document}
\vspace{1.3cm}

\title
{On the Largest intersecting set in $GL_2(q)$ and  some of its subgroups}

\author{Milad Ahanjideh}
\address{ Department of Industrial Engineering, Bo\u{g}azi\c{c}i University, Turkey}
\email{ahanjidm@gmail.com}
\keywords{ Erd\H{o}s-Ko-Rado property, the general linear group, intersecting set}

\begin{abstract}
Let $q$ be a power of a prime number and $V$ be the $2$-dimensional column vector space over a finite field $\mathbb{F}_{q}$.
Assume that $SL_2(V)<G\leq GL_2(V)$. In this paper we prove an Erd{\H{o}}s-Ko-Rado theorem for intersecting sets of G and we show that
every maximum intersecting set of $G$
 is either a coset of the stabilizer of a point or a coset of $\mathcal{G}_{\langle w\rangle}$, where $\mathcal{G}_{\langle w\rangle}=\{M\in G:\forall v\in V, Mv-v\in \langle w\rangle\}$, for some $w\in V\setminus \{0\}$. It is also shown that every intersecting set of $G$    is  contained in  a maximum intersecting set.
 \end{abstract}
\subjclass[2010]{05D05, 20B35}
\maketitle

%\TagsOnRight

\pagestyle{myheadings} \markboth{\centerline {\scriptsize On the Largest intersecting set in $GL_2(q)$  and  some of its subgroups }}
         {\centerline {\scriptsize On the Largest intersecting set in $GL_2(q)$  and  some of its subgroups }}

%%%%%%%%%% the following introduction form is an option %%%%%%%%%%%%

\bigskip
\bigskip
\medskip

\section{Introduction}
\vspace{.5cm}
One of the fundamental results in extremal set theory is the Erd{\H{o}}s-Ko-Rado theorem which was proved in 1961~\cite{EKR}. This theorem is concerned with obtaining the size of the largest family of subsets of size $r$ from $\{1,\ldots, n\}$ such that any two of these subsets have a nonempty intersection. The famous Erd{\H{o}}s-Ko-Rado theorem states that if $n\geq 2r$, then the largest family has size ${n-1 \choose r-1}$, and for $n>2r$, the only families that have this size are the families of all subsets containing a fixed element from $\{1,\ldots, n\}$.

There are many generalizations of this theorem for other mathematical objects. In this paper, we consider the Erd{\H{o}}s-Ko-Rado theorem for the general linear group of dimension $2$ and its subgroups containing the special linear group.

 Let $\Omega$ be a finite set and $K$ be a permutation group on it. We say that a subset $A$ of $K$ is {\it intersecting} if for any $\delta,\tau \in A$, there exists $x\in \Omega$ such that $\delta(x)=\tau(x)$.  It is obvious that the stabilizer of a point is an intersecting set. If the size of every intersecting set of a group $G$ is at most the size of the largest stabilizer of a point, we say $G$ has the {\it EKR property} while $G$ has the {\it strict EKR property} if  every maximum  intersecting set is a coset of a stabilizer of a point.

 A natural question that arises here is to determine which groups have the (strict) EKR property. Our first theorem is in line with answering this question. Before describing our theorems, let us mention a few interesting results.

 The first result in this area that dates back to 1977 is due to Frankl and Deza ~\cite{Frankl}. Let $S_n$ denote the symmetric group of degree $n$. Frankl and Deza  proved that $S_n$ has the EKR property. After that, Cameron and Ku \cite{cameron} showed that  $S_n$ has the strict EKR property as well which answered affirmatively to the conjecture posed in~\cite{Frankl}. This result was also proved independently by Larose and Malvenuto~\cite{LM}.

 It has been  proved that the alternating groups have the strict EKR property~\cite{Ku}. In ~\cite{Ahanjideh}, the authors showed that $SL_2(q)$ acting on a $2$-dimensional vector space over $\mathbb{F}_{q}$ has the strict EKR property. From combined results of \cite{2Ahmadi,MeagherS11}, it is known that the size of the  maximum intersecting set in $PSL_2(q)$ is $q(q-1)/2$. As a step forward, Ling  et al.~\cite{sin} presented the characterization of the maximum intersecting sets in  $PSL_2(q)$ whenever $q>3$ is an odd prime power.

 Furthermore, it has been shown that the projective general linear groups $PGL_2(q)$ have the strict EKR property~\cite{MeagherS11}, while the groups $PGL_3(q)$ do not so~\cite{MeagherS13}.  Finally, Spiga showed that the maximum intersecting sets of $PGL_{n+1}(q)$ acting on the points of the projective space $\mathbb{P}^n_q$  are the cosets of the
stabilizer of a point and the cosets of the stabilizer of a hyperplan \cite{Spiga}. For more studies, we refer the reader to~\cite{Godsil,Meagherps,Meaghertip}.

 It is worth mentioning that the group action plays an essential role to determine which groups  have (strict) EKR property. Because a group can have the EKR property under an action while it does not have the same property under the other action. We refer the reader to see~\cite{Ahmadi} for finding an example of such  groups.

In~\cite{Ahanjideh}, the authors showed that the EKR property holds for $GL_2(V)$, however their result is incomplete. In this paper, we inspire  their method and employ the properties of the linear groups and group-action for generalizing their result and correcting the flaw.  Let $SL_2(V)<G\leq GL_2(V)$. We show that $G$ has the EKR property and does not have the strict EKR property. Furthermore, we prove that every intersecting set $\mathcal{F}$ of $G$ is  contained in an intersecting set of the maximal size.

Note that the authors of ~\cite{Ahanjideh} showed that $GL_n(q)$ of arbitrary dimension admits EKR property. We do not already have any significant evidence whether $GL_n(q)$ has the strict EKR property or not.

\section{Additional notation and auxiliary Lemmas}
\vspace{.5cm}
 In this section, we recall some notation and definitions.  Also, we prove several lemmas in order to prove our main  results.

Assume that $p$ is  a prime number, $k$ is a positive integer and $q=p^k$. We denote a finite field of order $q$ by $\mathbb{F}_{q}$.
Let $V=\mathbb{F}_{q}^{2}$ and $G$ be a finite group such that $SL_2(V)<G\leq GL_2(V)$. Throughout this paper, assume that $\mathcal{F}$ is an intersecting set of $G$ acting naturally on $\mathbb{F}_{q}^{2}$ and $\mathcal{F}_0=\mathcal{F}\setminus\{Id\}$.
For a basis $B$ of $V$ and $g\in G$, $[g]_B$ stands for the matrix representation corresponding to $g$ and for $w\in V\setminus \{0\}$, ${\langle w\rangle}$ stands for the $1$-dimensional subspace of $V$ with $\{w\}$ as its basis. We denote the stabilizer subgroup of a point $v\in V$ by $G_v$ while $G_{\langle w\rangle}$ denotes the stabilizer subgroup of a subspace ${\langle w\rangle}$.

 For every $w\in V\setminus \{0\}$, set $$\mathcal{G}_{\langle w\rangle}=\{M\in G:\forall v\in V, Mv-v\in \langle w\rangle\}.$$
We define ${\rm det}(G)$ as the set of determinants of  all elements of $G$.
\begin{rem}\label{rem}
     Let $\mathcal{F}$ be an intersecting set of $G$. In this paper we assume that $Id\in \mathcal{F}$. Because if $x \in \mathcal{F}$, then $Id=x^{-1}x \in x^{-1}\mathcal{F}$ and $|\mathcal{F}|=|x^{-1}\mathcal{F}|$. So $x^{-1}\mathcal{F}$ is an intersecting set containing $Id$ with the same size as $|\mathcal{F}|$. Now we can use $x^{-1}\mathcal{F}$ instead of $\mathcal{F}$.
\end{rem}
\begin{lemma}\label{repOfH}
    For $w\in V\setminus \{0\}$, $\mathcal{G}_{\langle w\rangle}$ is  a subgroup of $G$ and $|\mathcal{G}_{\langle w\rangle}|=qd$, where $d= |{\rm det}(G)|$.
\end{lemma}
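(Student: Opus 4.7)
The plan is to first verify that $H_{\langle w\rangle}$ is a subgroup by checking closure under products and inverses, and then to parametrize its elements in a well-chosen basis so that the count $qd$ falls out.

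For the subgroup check, the key preliminary observation is that any $M\in H_{\langle w\rangle}$ fixes the line $\langle w\rangle$: taking $v=w$ in the defining condition gives $Mw-w\in\langle w\rangle$, hence $Mw\in\langle w\rangle$, and since $M$ is invertible we get $M\langle w\rangle=\langle w\rangle$. Given this, for $M,N\in H_{\langle w\rangle}$ and any $v\in V$, the decomposition
\[
(MN)v-v \;=\; M(Nv-v)+(Mv-v)
\]
lies in $\langle w\rangle$ because $M$ preserves $\langle w\rangle$ and each summand lies there. For inverses, write $u=M^{-1}v-v$; applying $M$ yields $Mu=v-Mv\in\langle w\rangle$, and invoking $M^{-1}\langle w\rangle=\langle w\rangle$ gives $u\in\langle w\rangle$.

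For the counting step, I would choose a basis $B=\{w,u\}$ of $V$ with any $u\notin\langle w\rangle$. The conditions $Mw-w\in\langle w\rangle$ and $Mu-u\in\langle w\rangle$ translate into
\[
[M]_B=\begin{pmatrix} a & b \\ 0 & 1\end{pmatrix}
\]
for some $a\in\mathbb{F}_q^{*}$ and $b\in\mathbb{F}_q$, and conversely any such matrix (viewed as an element of $GL_2(V)$) satisfies the defining condition of $H_{\langle w\rangle}$. Since $\det([M]_B)=a$, membership of such a matrix in $G$ forces $a\in\det(G)$.

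The one point that deserves care — and which I expect to be the main (though mild) obstacle — is the converse: for every $a\in\det(G)$ and every $b\in\mathbb{F}_q$, the matrix above actually lies in $G$. Here I would use the hypothesis $SL_2(V)<G$: pick $g\in G$ with $\det(g)=a$; then $g^{-1}\begin{pmatrix} a & b \\ 0 & 1\end{pmatrix}$ has determinant $1$, so it lies in $SL_2(V)\subseteq G$, and multiplying back by $g$ places the matrix in $G$. Consequently $a$ ranges freely over the $d$ elements of $\det(G)$ and $b$ ranges freely over the $q$ elements of $\mathbb{F}_q$, giving $|H_{\langle w\rangle}|=qd$.
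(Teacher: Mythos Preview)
Your proof is correct and follows essentially the same route as the paper: verify the subgroup axioms directly, then parametrize $H_{\langle w\rangle}$ via a basis containing $w$ to obtain the triangular matrix form and count $qd$ elements. The only notable tactical difference is that the paper's subgroup check is slightly slicker---it applies the defining condition of $H_{\langle w\rangle}$ to the vectors $M_2v$ and $M_1^{-1}v$ directly (so the line-preservation observation is never needed), whereas you route through $M\langle w\rangle=\langle w\rangle$; conversely, you are more explicit than the paper about why every such triangular matrix actually lies in $G$ via the $SL_2(V)<G$ hypothesis.
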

\begin{proof}
    Since $Id\in \mathcal{G}_{\langle w\rangle}$, $\mathcal{G}_{\langle w\rangle}\neq \emptyset$. Let $M_1,M_2\in \mathcal{G}_{\langle w\rangle}$. Then, for every $v\in V$, $M_1v-v,M_2v-v\in \langle w\rangle$. Thus, $M_1(M_2v)-M_2v\in \langle w\rangle$, and hence $M_1M_2v-M_2v+M_2v-v\in \langle w\rangle$. This shows that $M_1M_2v-v\in \langle w\rangle$. Therefore $M_1M_2\in \mathcal{G}_{\langle w\rangle}$.\\
    Also, $M^{-1}_1v\in V$, so $M_1M^{-1}_1v-M^{-1}_1v\in \langle w\rangle$ and consequently, $M^{-1}_1v-v\in \langle w\rangle$. This implies that $M^{-1}_1\in \mathcal{G}_{\langle w\rangle}$ and hence $\mathcal{G}_{\langle w\rangle}\leq G$.\\
    Now we are going to prove the second part. Assume that $B=\{v,w\}$ is a  basis of $V$. Let $g\in \mathcal{G}_{\langle w\rangle}\setminus \{Id\}$. Since $gv-v,gw-w$ belong to ${\langle w\rangle}$, we have
    $$[g]_B=\begin{bmatrix}
1 & 0 \\
c & \mu
\end{bmatrix},$$
where $\mu \in {\rm det}(G)$ and $c\in \mathbb{F}_q$. For every $\begin{pmatrix}
\alpha \\
\beta
\end{pmatrix} \in V$, we can see that
$$\begin{bmatrix}
1 & 0 \\
c & \mu
\end{bmatrix}
\begin{pmatrix}
\alpha \\
\beta
\end{pmatrix}-
\begin{pmatrix}
\alpha \\
\beta
\end{pmatrix}\in \langle w\rangle .
$$
 One can check at once that for every $\mu \in {\rm det}(G)$ and $ c\in \mathbb{F}_q$, if $g \in GL_2(V)$ such that
 $[g]_B=\begin{bmatrix}
1 & 0 \\
c & \mu
\end{bmatrix}$, then $g\in G$. This guarantees that $$\mathcal{G}_{\langle w\rangle}=\{g\in G:[g]_B=\begin{bmatrix}
1 & 0 \\
c & \mu
\end{bmatrix}, {\rm where}~ \mu\in {\rm det}(G)\ {\rm and}~c\in \mathbb{F}_q\},$$ which gives $|\mathcal{G}_{\langle w\rangle}|=q|{\rm det}(G)|$, as wanted.
\end{proof}
\begin{lemma}\label{stab}
    Let $v\in V\setminus \{0\}$. Then $|G_v|=q|{\rm det}(G)|$.
\end{lemma}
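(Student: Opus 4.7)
The plan is to mimic the structure of the proof of Lemma~\ref{repOfH}: fix a basis of $V$ adapted to $v$, write every element of $G_v$ in a normal form, and then count. Concretely, since $v\neq 0$, extend it to a basis $B=\{v,w\}$ of $V$. For $M\in G$, the condition $Mv=v$ is equivalent to saying that the first column of $[M]_B$ equals $\begin{pmatrix}1\\0\end{pmatrix}$. Hence every $M\in G_v$ satisfies
\[
[M]_B=\begin{bmatrix}1 & a\\ 0 & \mu\end{bmatrix}
\qquad \text{for some } a\in\mathbb{F}_q,\ \mu\in\mathbb{F}_q^{*},
\]
and in particular $\det(M)=\mu$, so $\mu\in\det(G)$ is a necessary condition.

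Conversely, I would show that for every $\mu\in\det(G)$ and every $a\in\mathbb{F}_q$ the matrix displayed above belongs to $G$. Here the hypothesis $SL_2(V)\leq G$ is crucial. First, $\begin{bmatrix}1 & a\\ 0 & 1\end{bmatrix}\in SL_2(V)\leq G$ for every $a$. Next, given $\mu\in\det(G)$, pick $N\in G$ with $\det(N)=\mu$; then $\begin{bmatrix}1 & 0\\ 0 & \mu\end{bmatrix}N^{-1}$ has determinant $1$, hence lies in $SL_2(V)\leq G$, forcing $\begin{bmatrix}1 & 0\\ 0 & \mu\end{bmatrix}$ itself to lie in $G$. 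Multiplying the unipotent and diagonal pieces produces every candidate matrix inside $G$.

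Putting the two paragraphs together, the assignment $M\mapsto(\mu,a)$ given by the matrix representation above is a bijection between $G_v$ and $\det(G)\times\mathbb{F}_q$, which yields $|G_v|=q\,|\det(G)|=qd$, as required.

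The only substantive step is the converse inclusion in the second paragraph; the rest is a direct count. This hinges entirely on the hypothesis $SL_2(V)\leq G$, which lets us freely adjust by $SL_2(V)$-elements within any prescribed determinant class. I expect this to be the main (though short) conceptual point, and it parallels the verification already carried out for $H_{\langle w\rangle}$ in Lemma~\ref{repOfH}.
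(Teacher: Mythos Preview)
Your proof is correct but takes a different route from the paper's. The paper argues via the orbit--stabilizer theorem: since $SL_2(V)\leq G$ acts transitively on $V\setminus\{0\}$, one has $[G:G_v]=q^2-1$, and combining this with $|G|=q(q^2-1)|\det(G)|$ (which follows from the short exact sequence $1\to SL_2(V)\to G\xrightarrow{\det}\det(G)\to 1$) gives the result in one line. Your approach is instead an explicit matrix count in a basis adapted to $v$, entirely parallel to the proof of Lemma~\ref{repOfH}. What you gain is a concrete description of $G_v$ as the set of upper-triangular matrices $\begin{bmatrix}1 & a\\ 0 & \mu\end{bmatrix}$ with $a\in\mathbb{F}_q$, $\mu\in\det(G)$, which is exactly the form used downstream (e.g.\ in Lemma~\ref{new}); the paper's proof is shorter but does not exhibit this normal form. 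One cosmetic remark: to get the general matrix directly, multiply in the order $\begin{bmatrix}1 & 0\\ 0 & \mu\end{bmatrix}\begin{bmatrix}1 & a\\ 0 & 1\end{bmatrix}=\begin{bmatrix}1 & a\\ 0 & \mu\end{bmatrix}$; the other order gives $\begin{bmatrix}1 & a\mu\\ 0 & \mu\end{bmatrix}$, which still covers everything as $a$ ranges over $\mathbb{F}_q$ but is less clean.
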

\begin{proof}
    Since $G$ acts transitively on $V\setminus \{0\}$, we have $[G:G_v]=q^2-1$. Observe that $|G|=q(q^2-1)|{\rm det}(G)|$. So, the lemma follows.

\end{proof}
\begin{lemma}
    Let $w\in V\setminus \{0\}$. Then $\mathcal{G}_{\langle w\rangle}$ is an intersecting set of $G$.
\end{lemma}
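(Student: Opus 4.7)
The plan is to reformulate the intersecting condition in terms of fixed vectors. Given $M_1,M_2\in H_{\langle w\rangle}$, finding $x\in V\setminus\{0\}$ with $M_1x=M_2x$ is equivalent to finding a nonzero fixed vector of $M:=M_2^{-1}M_1$. By Lemma \ref{repOfH}, $H_{\langle w\rangle}$ is a subgroup, so $M\in H_{\langle w\rangle}$. Thus the lemma reduces to the claim that \emph{every} element of $H_{\langle w\rangle}$ fixes some nonzero vector of $V$.

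To establish this claim, I would pick any $v\in V\setminus\langle w\rangle$, so that $B=\{v,w\}$ is a basis of $V$. The proof of Lemma \ref{repOfH} already shows that for any $M\in H_{\langle w\rangle}$,
$$[M]_B=\begin{bmatrix} 1 & 0 \\ c & \mu \end{bmatrix}$$
for some $c\in\mathbb{F}_q$ and $\mu\in{\rm det}(G)$. Then
$$[M-I]_B=\begin{bmatrix} 0 & 0 \\ c & \mu-1 \end{bmatrix},$$
which has determinant $0$ and is therefore singular. Hence $\ker(M-I)\neq\{0\}$, yielding a nonzero vector $x\in V$ with $Mx=x$, i.e., $M_1x=M_2x$.

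There is no serious obstacle here; the proof is essentially a one-line linear algebra observation on top of Lemmas stated above. The only points requiring attention are (i) reading the intersecting condition correctly as the existence of a common nonzero fixed point under the natural action on $V\setminus\{0\}$ (consistent with the transitivity statement used in Lemma \ref{stab}), and (ii) invoking the group structure from Lemma \ref{repOfH} to ensure $M_2^{-1}M_1$ stays inside $H_{\langle w\rangle}$, which is what makes the matrix form available.
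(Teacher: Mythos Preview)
Your proof is correct. Both arguments rest on the matrix description of $H_{\langle w\rangle}$ from Lemma~\ref{repOfH}, but the strategies diverge from there. The paper works directly with two elements $g_1,g_2\in H_{\langle w\rangle}$, writes down their matrices in the basis $B=\{v,w\}$, and explicitly exhibits a common fixed vector by a small case split: if the determinants agree then $w$ works, otherwise $v+\alpha w$ works for a specific $\alpha$. You instead invoke the subgroup property of $H_{\langle w\rangle}$ (also from Lemma~\ref{repOfH}) to reduce to the single-element statement that every $M\in H_{\langle w\rangle}$ fixes a nonzero vector, and then observe that $[M-I]_B$ is visibly singular. Your route avoids the case distinction and makes the role of the subgroup structure explicit; the paper's route is slightly more constructive in that it names the common vector outright without passing through $M_2^{-1}M_1$.
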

\begin{proof}
    Let $B=\{v,w\}$ be a basis of $V$. As mentioned in the proof of Lemma \ref{repOfH}, we have
    $\mathcal{G}_{\langle w\rangle}=\{g\in G:[g]_B=\begin{bmatrix}
1 & 0 \\
c & \mu
\end{bmatrix}, {\rm where}~ \mu\in {\rm det}(G)\ {\rm and}~c\in \mathbb{F}_q\}$. Let $g_1,g_2\in \mathcal{G}_{\langle w\rangle}$. Then there exist $\mu_1,\mu_2\in {\rm det}(G)$ and $c_1,c_2\in \mathbb{F}_q$ such that
$[g_1]_B=\begin{bmatrix}
1 & 0 \\
c_1 & \mu_1
\end{bmatrix}$ and
$[g_2]_B=\begin{bmatrix}
1 & 0 \\
c_2 & \mu_2
\end{bmatrix}$. If $\mu_1=\mu_2$, then $g_1w=g_2w$. Otherwise $g_1(v+\alpha w)=g_2(v+\alpha w)$, where $\alpha=(\mu_2-\mu_1)^{-1}(c_1-c_2)$. Thus $\mathcal{G}_{\langle w\rangle}$ is an intersecting set, as wanted.

    \end{proof}
\begin{lemma}\label{new}
    Let $B=\{v,w\}$ be a  basis of $V$ and $\mathcal{F}$ be an intersecting set of $G$. If $ x\in \mathcal{F}_0 \cap G_v$ and $y \in \mathcal{F}_0 \cap G_w$, then
    $[x]_B=\begin{bmatrix}
1 & c \\
0 & \lambda
\end{bmatrix}$
and
$[y]_B=\begin{bmatrix}
1+c\alpha & 0 \\
(\lambda-1)\alpha & 1
\end{bmatrix}$
for some $\lambda\in {\rm det}(G)$,  $c\in \mathbb{F}_q$ and $\alpha \in \mathbb{F}^{\ast}_q$.
\end{lemma}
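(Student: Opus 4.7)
The plan is first to pin down the shapes of $[x]_B$ and $[y]_B$ from the stabilizer hypotheses alone, and then to use the single intersecting condition between $x$ and $y$ to introduce the common parameter $\alpha$. Since $x \in G_v$ fixes $v$, the first column of $[x]_B$ must be $(1,0)^{T}$, so
\[
[x]_B = \begin{bmatrix} 1 & c \\ 0 & \lambda \end{bmatrix}
\]
for some $c \in \mathbb{F}_q$ and $\lambda = \det(x) \in \det(G)$; symmetrically, $y \in G_w$ forces
\[
[y]_B = \begin{bmatrix} \mu & 0 \\ \beta & 1 \end{bmatrix}
\]
with $\mu = \det(y) \in \det(G)$ and $\beta \in \mathbb{F}_q$. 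What remains is to show that $\mu = 1 + c\alpha$ and $\beta = (\lambda - 1)\alpha$ for some common $\alpha \in \mathbb{F}_q^{*}$.

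Next I would use the intersecting hypothesis applied to the pair $x, y \in \mathcal{F}$. Since the relevant action is on $V \setminus \{0\}$, there exists $u \in V \setminus \{0\}$ with $xu = yu$, i.e., $(x - y)u = 0$. Hence $x - y$ is singular as a linear endomorphism of $V$, and a direct expansion of the $2 \times 2$ determinant in the basis $B$ yields
\[
0 = \det(x - y) = (1 - \mu)(\lambda - 1) + c\beta,
\]
which I rewrite as the single scalar identity $(\mu - 1)(\lambda - 1) = c\beta$. Notice that the pairs $(Id, x)$ and $(Id, y)$ intersect trivially (at $v$ and $w$ respectively), so no further equations are produced.

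Finally, I would extract $\alpha$ by a short case split on whether $\lambda = 1$. If $\lambda \neq 1$, the hypothesis $y \neq Id$ forces $\beta \neq 0$ (otherwise $\mu = 1$ by the scalar identity, and then $[y]_B = I$), so I set $\alpha = \beta / (\lambda - 1)$ and check that $c\alpha = \mu - 1$ using the identity. If $\lambda = 1$, then $x \neq Id$ forces $c \neq 0$, the identity collapses to $c\beta = 0$ whence $\beta = 0$, and $y \neq Id$ then forces $\mu \neq 1$, so $\alpha = (\mu - 1)/c$ works. In both branches $\alpha \in \mathbb{F}_q^{*}$, and a substitution verifies the claimed two equalities. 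The only genuinely delicate point is ensuring $\alpha \neq 0$, and that is precisely where the assumption $x, y \in \mathcal{F}_0$ rather than merely $x, y \in \mathcal{F}$ is used.
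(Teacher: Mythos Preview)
Your proof is correct, but the route differs from the paper's. The paper works directly with the common vector: from $xu=yu$ with $u=av+bw$, it first argues (implicitly, via $x,y\neq Id$) that $a,b\in\mathbb{F}_q^{\ast}$, rescales to $u=v+\alpha w$ with $\alpha=a^{-1}b$, and then reads off $\lambda'=1+c\alpha$ and $c'=(\lambda-1)\alpha$ by comparing coordinates of $x(v+\alpha w)=y(v+\alpha w)$. Your argument instead forgets the specific $u$ and retains only the singularity of $x-y$, extracting the single scalar relation $(\mu-1)(\lambda-1)=c\beta$ and then reconstructing $\alpha$ by a case split on $\lambda=1$. The paper's approach is shorter and gives $\alpha$ an immediate geometric meaning (the slope of $u$ in the basis $B$), which is convenient for the downstream use of the inclusion $\mathcal{F}_0\cap G_w\subseteq\{\,[z]_B=\left[\begin{smallmatrix}1+c\gamma&0\\(\lambda-1)\gamma&1\end{smallmatrix}\right]:\gamma\in\mathbb{F}_q^{\ast}\,\}$. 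Your determinant approach is slicker in that it never touches $u$ explicitly, at the price of the small case analysis; note that the case split is exactly where you recover the information that the paper gets from $a,b\neq 0$.
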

\begin{proof}
    Since $x\in G_v\setminus\{Id\}$ and $y\in G_w\setminus G_v$, we have
    $[x]_B=\begin{bmatrix}
1 & c \\
0 & \lambda
\end{bmatrix}$
and
$[y]_B=\begin{bmatrix}
\lambda' & 0 \\
c' & 1
\end{bmatrix}$,
where $\lambda,\lambda'\in {\rm det}(G)$ and $c,c'\in \mathbb{F}_q$. The set $\mathcal{F}$ is an intersecting set of $G$ and $x,y\in \mathcal{F}$. So, there exists $u\in V\setminus \{0\}$ such that $x(u)=y(u)$. Note that $V={\langle v,w \rangle}$. Hence $u=av+bw$ for some $a,b\in \mathbb{F}^{\ast}_q$ and $ax(v+a^{-1}bw)=x(u)=y(u)=ay(v+a^{-1}bw)$. Therefore $x(v+\alpha w)=y(v+\alpha w)$, where $\alpha=a^{-1}b$. It follows that $(1+c\alpha)v+\lambda\alpha w=\lambda'v+(c'+\alpha)w$. Consequently, $\lambda'=1+c\alpha$ and $c'=(\lambda-1)\alpha$. Also this shows that
\begin{equation}\label{51}
    \mathcal{F}_0\cap G_w\subseteq \{ z\in G~:~ [z]_B=\begin{bmatrix}
1+c\gamma & 0 \\
(\lambda-1)\gamma & 1
\end{bmatrix},~
{\rm where}~ \gamma\in \mathbb{F}_q^{\ast} \}.
\end{equation}
\end{proof}
%^^^^^^^^^^^^^^^^^^^^^^^^^^^^^^^^^^^^^666
\begin{cor}\label{ned}
    Let $B=\{v,w\}$ be a  basis of $V$ and $\mathcal{F}$ be an intersecting set of $G$. If $\mathcal{F} \cap G_w \neq \{Id\}$ and  $ x\in \mathcal{F}_0 \cap G_v$ such that
    $[x]_B=\begin{bmatrix}
1 & 0 \\
0 & \lambda
\end{bmatrix}$,
for some $\lambda\in {\rm det}(G)$,   then $\mathcal{F} \cap G_v \subseteq G_{\langle w\rangle}$.
\end{cor}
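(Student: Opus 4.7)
The aim is to show that every $z \in \mathcal{F} \cap G_v$ stabilises the line $\langle w \rangle$. Since this holds trivially for $z = Id$, it suffices to consider $z \in \mathcal{F}_0 \cap G_v$. As $z$ fixes $v$, in the basis $B$ one can write $[z]_B = \begin{bmatrix} 1 & c'' \\ 0 & \mu \end{bmatrix}$ for some $c'' \in \mathbb{F}_q$ and $\mu \in \det(G)$, and then $z(w) = c'' v + \mu w$. Hence $z \in G_{\langle w \rangle}$ precisely when $c'' = 0$, so the corollary reduces to showing that this off-diagonal entry vanishes for every such $z$.

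The strategy is to apply Lemma \ref{new} twice, using a common partner on the $G_w$ side. The hypothesis $\mathcal{F} \cap G_w \neq \{Id\}$ furnishes some $y \in \mathcal{F}_0 \cap G_w$. Applying Lemma \ref{new} first to the pair $(x, y)$, and using that the $(1,2)$ entry of $[x]_B$ is zero, one obtains
\[
[y]_B = \begin{bmatrix} 1 & 0 \\ (\lambda - 1)\alpha & 1 \end{bmatrix}
\]
for some $\alpha \in \mathbb{F}_q^{*}$; in particular the top-left entry of $[y]_B$ equals $1$. Applying Lemma \ref{new} a second time, now to the pair $(z, y)$, yields some $\alpha' \in \mathbb{F}_q^{*}$ for which the same top-left entry of $[y]_B$ is expressed as $1 + c''\alpha'$. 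Comparing the two expressions gives $c''\alpha' = 0$, and since $\alpha' \neq 0$, we conclude $c'' = 0$, as required.

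There is no substantive obstacle beyond keeping track of parameters; the corollary is an immediate consequence of Lemma \ref{new}. The only point that demands care is that the scalar $\alpha$ in Lemma \ref{new} depends on the specific pair of elements to which the lemma is applied, so the two invocations must use distinct symbols $\alpha$ and $\alpha'$. It is precisely the compatibility of the two resulting descriptions of the single matrix $[y]_B$ that forces $c'' = 0$, and hence $z \in G_{\langle w\rangle}$.
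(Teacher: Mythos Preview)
Your proof is correct and follows essentially the same approach as the paper: both pick $y \in \mathcal{F}_0 \cap G_w$, apply Lemma~\ref{new} to the pair $(x,y)$ to see that the top-left entry of $[y]_B$ equals $1$, then apply the lemma (or equivalently the inclusion \eqref{51}) to the pair $(z,y)$ to express that same entry as $1 + c''\alpha'$, forcing $c'' = 0$. Your remark that the two invocations require distinct parameters $\alpha,\alpha'$ is exactly the one subtlety worth flagging.
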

\begin{proof} Since $\mathcal{F}\cap G_w \neq \{Id\}$, there exists an element $ y \in \mathcal{F}_0\cap G_w$. By Lemma \ref{new}, \begin{equation} \label{501}[y]_B=\begin{bmatrix}
1 & 0 \\
(\lambda-1)\alpha & 1
\end{bmatrix},\end{equation}
for some  $\alpha \in \mathbb{F}^{\ast}_q$. Now, let $z \in \mathcal{F}_0 \cap G_v$. Then, $ [z]_B=\begin{bmatrix}
1 & c' \\
0 & \lambda'
\end{bmatrix}$,
for some $\lambda'\in {\rm det}(G)$ and  $c'\in \mathbb{F}_q$. By \eqref{51}, we know that
%^^^^^^^^^^^^^^^^^^^^^^^^^66
\begin{equation}\nonumber
    \mathcal{F}_0\cap G_w\subseteq \{ w\in G~:~ [w]_B=\begin{bmatrix}
1+c'\gamma & 0 \\
(\lambda'-1)\gamma & 1
\end{bmatrix},~
{\rm where}~ \gamma\in \mathbb{F}_q^{\ast} \}.
\end{equation}
%%%%%%%%%%%%%%%%%%%%%%%%%%%%%%%%%%%
 Hence, $[y]_B=\begin{bmatrix}
1+c'\mu & 0 \\
(\lambda'-1)\mu & 1
\end{bmatrix},$
for some  $\mu \in \mathbb{F}^{\ast}_q$. So, \eqref{501} forces $1=1+c'\mu$. Since $ \mu \neq 0$, $c'=0$. Thus, $z \in G_{\langle w\rangle}$. This shows that $\mathcal{F} \cap G_v \subseteq G_{\langle w\rangle}$, as wanted.
\end{proof}
%^^^^^^^^^^^^^^^^^^^^^^^^^^^^66
%^^^^^^^^^^^^^^^^^^^^^^^^^^^^^^^^^^6
\begin{cor}\label{cor51}
    Let $B=\{v,w\}$ be a basis of $V$ and $\mathcal{F}$ be an intersecting set of $G$. If $\mathcal{F}_0\cap G_{v}\neq \emptyset$, then $|\mathcal{F}_0 \cap G_w |\leq\epsilon$, where
$$ \epsilon=
\begin{cases}
    q-1, & {\rm if}~ \mathcal{F}_0 \cap G_v\subseteq G_{\langle w\rangle}, \\
    d-1,  & {\rm otherwise},
\end{cases}
$$
and $d= |{\rm det}(G)|$. Also, if $\mathcal{F}_0\cap G_{w} \not\subseteq  G_{\langle v\rangle} $, then $|\mathcal{F}_0\cap G_v|\leq d-1$.
\end{cor}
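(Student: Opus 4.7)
The plan is to fix an element $x \in \mathcal{F}_0 \cap G_v$ (which exists by hypothesis) with matrix representation $[x]_B = \begin{bmatrix} 1 & c \\ 0 & \lambda \end{bmatrix}$ for some $\lambda \in \det(G)$, $c \in \mathbb{F}_q$, and then apply Lemma \ref{new} directly to $x$ together with any $y \in \mathcal{F}_0 \cap G_w$. The lemma forces
\[
[y]_B = \begin{bmatrix} 1 + c\alpha & 0 \\ (\lambda-1)\alpha & 1 \end{bmatrix} \quad\text{for some }\alpha \in \mathbb{F}_q^{\ast},
\]
and since distinct $\alpha$'s yield distinct matrices (the two off-entries cannot both vanish without forcing $y = Id$ or $x = Id$), counting the possible values of $\alpha$ will control $|\mathcal{F}_0 \cap G_w|$.

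To split into the two cases, I would argue as follows. If $\mathcal{F}_0 \cap G_v \subseteq G_{\langle w\rangle}$, then for every $x$ in this intersection we have $c = 0$ (because $x\in G_v\cap G_{\langle w \rangle}$ forces $xw \in \langle w\rangle$, killing the upper-right entry), and $\lambda\neq 1$ since $x\neq Id$. Thus the parameter $\alpha$ ranges freely over $\mathbb{F}_q^{\ast}$ and gives at most $q-1$ distinct matrices for $y$. If on the other hand $\mathcal{F}_0 \cap G_v \not\subseteq G_{\langle w\rangle}$, I would specifically choose $x$ so that $c\neq 0$; then the $(1,1)$-entry $1+c\alpha = \det(y)$ must lie in $\det(G)$, and since $\alpha \neq 0$ and $c \neq 0$ this entry lies in $\det(G)\setminus \{1\}$, giving at most $d-1$ admissible values of $\alpha$ and hence at most $d-1$ possible matrices $[y]_B$.

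For the second statement, the plan is to apply exactly the same argument with the roles of $v$ and $w$ swapped, i.e.\ using the basis $B' = \{w,v\}$. The hypothesis $\mathcal{F}_0 \cap G_w \not\subseteq G_{\langle v\rangle}$ translates (after this swap) into the "otherwise" branch of the first part, and so the analogue of Lemma \ref{new} now constrains any $x \in \mathcal{F}_0 \cap G_v$ by $1 + c''\alpha \in \det(G)\setminus\{1\}$, where $c''\neq 0$ is the relevant entry of the chosen $y$. Counting the permissible $\alpha$'s gives $|\mathcal{F}_0 \cap G_v| \leq d-1$.

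The only step that requires genuine care is verifying injectivity of the parameterization $\alpha \mapsto [y]_B$ in the boundary situations, and ensuring that the chosen $x$ actually realizes $c\neq 0$ in the second case of the first claim; both reduce to remarking that at least one of $c$ and $\lambda-1$ is nonzero (otherwise $x = Id$), so distinct nonzero $\alpha$ produce distinct lower-left or upper-left entries. Everything else is a direct substitution into the matrix identity supplied by Lemma \ref{new}.
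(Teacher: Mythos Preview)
Your proposal is correct and follows essentially the same route as the paper: fix $x\in\mathcal{F}_0\cap G_v$, invoke Lemma~\ref{new} (equivalently the inclusion~\eqref{51}) to force every $y\in\mathcal{F}_0\cap G_w$ into the one-parameter family indexed by $\alpha\in\mathbb{F}_q^\ast$, and then count admissible $\alpha$'s via the determinant constraint $1+c\alpha\in\det(G)$ when $c\neq 0$, finishing the last assertion by the $v\leftrightarrow w$ symmetry. Your explicit care in choosing $x$ with $c\neq 0$ in the ``otherwise'' branch is in fact a small clarification over the paper's wording, and the injectivity remark, while not strictly needed for the upper bounds, is harmless.
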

\begin{proof}
Let $x\in \mathcal{F}_0\cap G_{v}$. By Lemma \ref{new}, $[x]_B=\begin{bmatrix}
1 & c \\
0 & \lambda
\end{bmatrix}$ where $\lambda\in {\rm det}(G)$ and $c\in \mathbb{F}_q$. By (\ref{51}), we have

$$\mathcal{F}_0\cap G_w\subseteq \{ z\in G~:~ [z]_B=\begin{bmatrix}
1+c\gamma & 0 \\
(\lambda-1)\gamma & 1
\end{bmatrix},~
{\rm where}~ \gamma\in \mathbb{F}_q^{\ast} \}.$$
Thus $|\mathcal{F}_0 \cap G_w|\leq q-1$. If $\mathcal{F} \cap G_v\not\subseteq G_{\langle w\rangle}$, then $c\neq0$.
Also, $1+c\gamma\in {\rm det}(G)$. So $\gamma\in c^{-1}(1-{\rm det}(G))$. Since $\gamma\neq 0$, $\gamma\in c^{-1}(1-{\rm det}(G))\setminus \{0\}$. It follows from (\ref{51}) that $|\mathcal{F}_0 \cap G_w |\leq d-1$, as desired. The same argument completes the proof for $|\mathcal{F}_0\cap G_v|$.
\end{proof}
%+++++++++++++++++++++++++++++++++++++++++++==
\begin{ex} Let $V=\mathbb{F}_{3}^{2}$ and  $G=GL_2(V)$. Consider $v=(1,0)$, $w=(0,1)$ and $w_1=(1,-1)$ as the elements of $V$.  Then $B=\{v,w\}$ is a basis of $V$. Set
\begin{equation*}
\mathcal{F}_1=\left\{\left[\begin{array}{ll}
1 & 0 \\
0 & 1
\end{array}\right],\left[\begin{array}{cc}
1 & -1 \\
0 & -1
\end{array}\right],\left[\begin{array}{cc}
-1 & 0 \\
-1 & 1
\end{array}\right],\left[\begin{array}{ll}
0 & 1 \\
1 & 0
\end{array}\right],\left[\begin{array}{cc}
-1 & 1 \\
-1 & 0
\end{array}\right], \left[\begin{array}{cc}
0 & -1 \\
1 & -1
\end{array}\right]
\right\}
\end{equation*} and
$$\mathcal{F}=\{M\in GL_2(V): ~[M]_B\in \mathcal{F}_1\}.$$
One can check at once that $\mathcal{F}=\mathcal{G}_{\langle w_1\rangle}$ and $d=|{\rm det}(G)|=2$. Also,  $\mathcal{F}_0 \cap G_v \not \subseteq G_{\langle w\rangle}$ and $\mathcal{F}_0 \cap G_{ w}=\left\{\left[\begin{array}{cc}
-1 & 0 \\
-1 & 1
\end{array}\right]\right\}$. Hence, $|\mathcal{F}_0 \cap G_{ w}|=1$.
%___________________________
\end{ex}
%****************************************************
\begin{cor}\label{cor}
Let $B=\{v,w\}$ be a basis of $V$ and $\mathcal{F}$ be an intersecting set of $G$. If $x,y\in \mathcal{F}_0$ such that
$ x\in G_v\cap G_{\langle w\rangle}$ and $ y\in G_w$, then $y(v)=v+c'w$, for some $c'\in \mathbb{F}_q$.
\end{cor}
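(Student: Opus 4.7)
The plan is to recognize this corollary as a direct specialization of Lemma \ref{new}, exploiting the extra condition $x \in G_{\langle w \rangle}$ to force a zero entry in $[x]_B$.

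First I would unpack the hypothesis on $x$. Since $x \in G_v$, we have $x(v) = v$, pinning down the first column of $[x]_B$. The condition $x \in G_{\langle w \rangle}$ means $x(w) \in \langle w \rangle$, so $x(w) = \lambda w$ for some $\lambda \in \det(G)$. Thus
$$[x]_B = \begin{bmatrix} 1 & 0 \\ 0 & \lambda \end{bmatrix},$$
and in the notation of Lemma \ref{new} this is exactly the case $c = 0$ (with $x \neq Id$ forcing $\lambda \neq 1$).

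Next I would invoke Lemma \ref{new} directly on the pair $(x,y)$, which is legitimate because $x \in \mathcal{F}_0 \cap G_v$ and $y \in \mathcal{F}_0 \cap G_w$. The lemma delivers
$$[y]_B = \begin{bmatrix} 1 + c\alpha & 0 \\ (\lambda - 1)\alpha & 1 \end{bmatrix}$$
for some $\alpha \in \mathbb{F}_q^{*}$. Substituting $c = 0$ collapses this to $[y]_B = \begin{bmatrix} 1 & 0 \\ (\lambda - 1)\alpha & 1 \end{bmatrix}$. Reading off the first column, $y(v) = v + (\lambda - 1)\alpha \, w$, so setting $c' = (\lambda - 1)\alpha \in \mathbb{F}_q$ yields the claim.

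There is no substantive obstacle here: the statement is essentially the specialization of the conclusion of Lemma \ref{new} to the case when the upper-right entry of $[x]_B$ vanishes, which is precisely what makes the upper-left entry of $[y]_B$ reduce to $1$ and thereby forces $y(v)$ to land in the affine line $v + \mathbb{F}_q w$.
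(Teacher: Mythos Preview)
Your proof is correct and follows essentially the same route as the paper, which simply cites equation~(\ref{51}) from Lemma~\ref{new}: the extra hypothesis $x\in G_{\langle w\rangle}$ forces $c=0$ in $[x]_B$, whence the $(1,1)$-entry $1+c\gamma$ of $[y]_B$ collapses to $1$ and $y(v)=v+c'w$.
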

\begin{proof}
The proof follows immediately from (\ref{51}).
\end{proof}
%--------------------------------------------------------
\section{Erd{\H{o}}s-Ko-Rado Theorem for the general linear group and its subgroups}
\vspace{.5cm} The purpose of this section is to prove analogues of
the Erd{\H{o}}s-Ko-Rado theorem for the general linear group of
dimension $2$ and some of  its subgroups. The following observations  will be needed throughout this section.
\begin{proposition} \label{extra1} Let $B=\{v,w\}$ be a basis of $V$ and $\mathcal{F}$ be an intersecting set of $G$ such that $\mathcal{F}_0 \cap G_v \neq \emptyset $ and $
\mathcal{F} \subseteq
G_{\langle w \rangle}$. Then, $\mathcal{F}\subseteq
\mathcal{G}_{\langle w \rangle}$.
\end{proposition}
\begin{proof}By Remark~\ref{rem}, $Id\in \mathcal{F}$. Set $w_t=v+tw$,  where $t\in \mathbb{F}_q$. Since $\mathcal{F} \subseteq
G_{\langle w \rangle}$, we get   $\mathcal{F} \cap  G_{w_t}\subseteq G_{\langle w \rangle}$ for  every $t\in \mathbb{F}_q$.
Note that $\{w_t:t\in \mathcal{F}_q\}\cup \{w\}$ are pairwise
linearly independent, and for every $u\in V$, either $u\in {\langle
w \rangle}$ or there exists $t\in \mathbb{F}_q$ such that $u\in
{\langle {w_t} \rangle}$. Since $\mathcal{F}$ is an intersecting
set of $G$, for every $g\in \mathcal{F}$, there exists an element
$u\in V\setminus \{0\}$ such that $g(u)=Id(u)=u$. So, $g\in G_u$.
Consequently,
        \begin{equation}\label{union}
        \mathcal{F}=(\dot{\bigcup_{t\in \mathbb{F}_q}} \mathcal{F}_0 \cap G_{w_t})\dot{\cup}(\mathcal{F}_0 \cap G_{w})\dot{\cup} \{Id\}.
        \end{equation}
If $g\in \mathcal{F}_0\cap G_v$, then $gv-v=0\in {\langle w
\rangle}$ and $g(\alpha v+w)-(\alpha v+w)\in {\langle w \rangle}$,
for every $\alpha\in\mathbb{F}_q$, because $g\in \mathcal{F}_0\cap
G_v\subseteq G_{\langle w \rangle}$. So $g\in \mathcal{G}_{\langle
w \rangle}$. If $t\in\mathbb{F}^{\ast}_q$ and
$g\in\mathcal{F}_0\cap G_{w_t}$, then
\begin{equation*}
    \begin{aligned}
        gv-v =g(v+tw-tw)-v=v+tw-tg(w)-v & \\=tw-tg(w)\in {\langle w \rangle},
    \end{aligned}
\end{equation*}
because $g\in \mathcal{F}_0 \cap  G_{w_t}\subseteq G_{\langle w
\rangle}$.  Also
\begin{equation*}
    \begin{aligned}
        g(\alpha v +w)-(\alpha v+w) & = \alpha(v+tw)+(-t\alpha+1)g(w)-(\alpha v+w) \\  & =(\alpha t-1)w+(-t\alpha+1)g(w)\in {\langle w \rangle},
    \end{aligned}
\end{equation*}
for every $\alpha \in \mathbb{F}_q$. Therefore $g\in
\mathcal{G}_{\langle w \rangle}$. Observe that if $g\in G_w$, then
$g\in \mathcal{G}_{\langle w \rangle}$. Thus $\mathcal{F}\subseteq
\mathcal{G}_{\langle w \rangle}$, as wanted.
\end{proof}
%================
\begin{proposition} \label{extra} Let $B=\{v,w\}$ be a basis of $V$ and $\mathcal{F}$ be an intersecting set of $G$. If $\mathcal{F}_0\cap G_w\neq\emptyset $ and  $\emptyset \neq \mathcal{F}_0\cap G_v \subseteq G_{\langle w \rangle}$, then $\mathcal{F}\subseteq \mathcal{G}_{\langle w \rangle}$.
\end{proposition}
\begin{proof}   Let $x\in \mathcal{F}_0 \cap  G_v$ and $y \in \mathcal{F}_0\cap G_w$.    Then, $y\not\in G_v$ and
    $[y]_B=\begin{bmatrix}
\lambda & 0 \\
c' & 1
\end{bmatrix}$, where $c'\in \mathbb{F}_q$ and $\lambda\in\mathbb{F}^{\ast}_q$.
Since $\mathcal{F}\cap G_v \subseteq G_{\langle w \rangle}$, $\lambda=1$ by Corollary \ref{cor}.
 For $t\in \mathbb{F}_q$, let $w_t=v+tw$.
        \begin{itemize}[leftmargin=*]
        \item[(i)] If there exists  $t\in \mathbb{F}^{\ast}_q$ such that $\mathcal{F}\cap G_{w_t} \not\subseteq G_{\langle w \rangle}$, then there is  $z\in (\mathcal{F}\cap G_{w_t})\setminus G_{\langle w \rangle}$. Since $z,y\in \mathcal{F}_0$ and $\mathcal{F}$ is an intersecting set of $G$, there exists $u\in V\setminus \{0\}$ such that $y(u)=z(u)$. If $u=v$, then $z(v)=y(v)=v+c'w$. We can see that $v+tw=z(v+tw)=z(v)+tz(w)=v+c'w+tz(w)$. Therefore $z(w)= t^{-1}(tw-c'w)\in {\langle w \rangle}$. So $z\in G_{\langle w \rangle}$. This is a contradiction. If $u=v+\alpha w$, for some $\alpha\in \mathbb{F}^{\ast}_q$, then $v+tw+(\alpha-t)z(w)=z(v+\alpha w)=y(v+\alpha w)=v+c' w+\alpha w$, for some $c'\in \mathbb{F}_q$. Thus, if $\alpha\neq t$, then $z(w)=(\alpha-t)^{-1}(c'+\alpha-t)w\in {\langle w \rangle}$, which is a contradiction. Also, if $\alpha=t$, then $y(v+tw)=z(v+tw)=v+tw$. So, $y(v)=v$. Consequently $y\in G_v$, which is a contradiction.
        \item[(ii)] Assume that for every $t\in \mathbb{F}^{\ast}_q$, $\mathcal{F} \cap  G_{w_t}\subseteq G_{\langle w
        \rangle}$. Then, \eqref{union} shows that $\mathcal{F} \subseteq
G_{\langle w \rangle}$. It follows from Proposition \ref{extra1}
that  $\mathcal{F}\subseteq \mathcal{G}_{\langle w \rangle}$, as wanted.
\end{itemize}
\end{proof}
%######################################3
\begin{theorem}\label{erk}
    Let $\mathcal{F}$ be an intersecting set of $G$. Then $|\mathcal{F}|\leq dq$, where $d=|{\rm det}(G)|$. In particular, $|\mathcal{F}|=dq$ if and only if $\mathcal{F}$ is either a coset of $G_v$ or a coset of $\mathcal{G}_{\langle w\rangle}$ for some $v, w\in V\setminus \{0\}$.
\end{theorem}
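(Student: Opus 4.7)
The plan is to assume $Id \in \mathcal{F}$ (Remark \ref{rem}) and stratify $\mathcal{F}_0$ by pointwise fixed lines. Every element of $\mathcal{F}_0$ fixes some nonzero vector (by intersecting with $Id$) and, being non-identity, has a unique $1$-dimensional pointwise fixed subspace; since $G_{av} = G_v$ for $a \in \mathbb{F}_q^{\ast}$, setting $S_\ell := \mathcal{F}_0 \cap G_{v_\ell}$ for any $v_\ell \in \ell \setminus \{0\}$ yields a disjoint decomposition $\mathcal{F}_0 = \bigsqcup_{\ell \in L} S_\ell$, with $L := \{\ell : S_\ell \neq \emptyset\}$ a set of at most $q + 1$ lines, and $|\mathcal{F}| = 1 + \sum_{\ell \in L} |S_\ell|$.

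If $|L| = 1$, then $\mathcal{F} \subseteq G_{v_\ell}$ and Lemma \ref{stab} gives $|\mathcal{F}| \leq qd$ with equality iff $\mathcal{F} = G_{v_\ell}$. For $|L| \geq 2$, Corollary \ref{cor51} gives $|S_\ell| \leq q - 1$, strengthened to $|S_\ell| \leq d - 1$ unless $S_{\ell'} \subseteq G_{\langle v_\ell\rangle}$ for every other $\ell' \in L$. The combinatorial heart of the proof is that at most one ``heavy'' line $\ell^{\ast}$ with $|S_{\ell^{\ast}}| \geq d$ can exist. Indeed, if two distinct $\ell, \ell' \in L$ were both heavy, the refined bound would force $S_\ell \subseteq G_{v_\ell} \cap G_{\langle v_{\ell'}\rangle}$, and a direct matrix calculation in the basis $\{v_\ell, v_{\ell'}\}$ shows this intersection consists of only $d$ diagonal matrices $\bigl[\begin{smallmatrix}1&0\\0&\mu\end{smallmatrix}\bigr]$ with $\mu \in \det(G)$, contradicting $|S_\ell| \geq d$ after excluding $Id$. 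With at most one heavy line,
\[
|\mathcal{F}| \;\leq\; 1 + (q - 1) + q(d - 1) \;=\; qd,
\]
and since $d \leq q - 1$, equality forces $|L| = q + 1$, a heavy line $\ell^{\ast}$ with $|S_{\ell^{\ast}}| = q - 1$, and $|S_\ell| = d - 1$ together with $S_\ell \subseteq G_{\langle v_{\ell^{\ast}}\rangle}$ for every other $\ell \in L$.

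In this equality configuration I show $\mathcal{F} \subseteq H_{\langle w^{\ast}\rangle}$ for any $w^{\ast} \in \ell^{\ast} \setminus \{0\}$. For $\ell \neq \ell^{\ast}$, each $x \in S_\ell$ fixes $v_\ell$ and stabilizes $\langle w^{\ast}\rangle$, so in the basis $\{v_\ell, w^{\ast}\}$ it is $\bigl[\begin{smallmatrix}1&0\\0&\mu\end{smallmatrix}\bigr]$, which lies in $H_{\langle w^{\ast}\rangle}$ by the matrix description in Lemma \ref{repOfH}. For $y \in S_{\ell^{\ast}}$ and any $x \in S_\ell$, Corollary \ref{cor} applies (with $v = v_\ell$, $w = w^{\ast}$) and gives $y(v_\ell) - v_\ell \in \langle w^{\ast}\rangle$; combined with $y(w^{\ast}) = w^{\ast}$ and linearity, $y(u) - u \in \langle w^{\ast}\rangle$ for all $u \in V$, i.e., $y \in H_{\langle w^{\ast}\rangle}$. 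Thus $\mathcal{F} \subseteq H_{\langle w^{\ast}\rangle}$, and the size match closes the case. Finally, dropping the assumption $Id \in \mathcal{F}$ via Remark \ref{rem} turns both maxima into arbitrary cosets. The main obstacle is the one-heavy-line dichotomy: it is what converts the naive bound $1 + (q+1)(q-1)$ into the sharp $qd$; the remaining steps are controlled manipulations of the normal forms supplied by Lemma \ref{new} and Corollaries \ref{ned}--\ref{cor}.
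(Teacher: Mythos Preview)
Your argument is correct and uses the same toolkit (Lemmas~\ref{repOfH}--\ref{new}, Corollaries~\ref{ned}--\ref{cor}) as the paper, but the organization is genuinely different and somewhat cleaner. The paper fixes a specific pair $v,w$ and runs a case analysis on containments such as $\mathcal{F}\cap G_v\subseteq G_{\langle w\rangle}$, with sub-cases (i.a) deriving a contradiction, (i.b) producing $\mathcal{F}\subseteq H_{\langle w\rangle}$, (ii) handling the symmetric situations, and (iii) giving the crude bound $(q+1)(d-1)+1<qd$ when no containment holds. Your ``heavy line'' dichotomy packages all of this into one statement: at most one line $\ell^{\ast}$ can satisfy $|S_{\ell^{\ast}}|\geq d$, because two heavy lines would force each $S_\ell$ into a set of $d-1$ diagonal matrices. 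This immediately yields the sharp count $1+(q-1)+q(d-1)=qd$ and isolates the equality configuration ($|L|=q+1$, $|S_{\ell^{\ast}}|=q-1$, every other $S_\ell\subseteq G_{\langle v_{\ell^{\ast}}\rangle}$ of size $d-1$) without the back-and-forth of sub-case (i.a). The paper's approach has the advantage of building the containment $\mathcal{F}\subseteq H_{\langle w\rangle}$ directly inside the case analysis, whereas you assemble it afterwards from Corollary~\ref{cor}; conversely, your route makes the counting transparent and avoids choosing a distinguished basis until the very end. Two minor points worth making explicit in a final write-up: the hypothesis $SL_2(V)<G$ guarantees $d\geq 2$, so the non-heavy strata are indeed nonempty in the equality case (needed to invoke Corollary~\ref{cor}); and the ``if'' direction of the characterization rests on the earlier lemma that $H_{\langle w\rangle}$ is itself intersecting, which you use only implicitly.
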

\begin{proof}
    According to Remark~\ref{rem}, assume that $Id\in \mathcal{F}$.  Let $x\in \mathcal{F}_0$. Since $Id\in \mathcal{F}$ and $\mathcal{F}$ is an intersecting set of $G$, there exists $v \in V\setminus \{0\}$ such that $x(v)=v$. This yields $x\in G_v$. If
    $\mathcal{F}\subseteq   G_v$, then Lemma \ref{stab} completes the proof. Now let $\mathcal{F}\not\subseteq G_v$. Note that for every $a\in\mathbb{F}^{\ast}_q$, $G_v=G_{av}$. Let $y\in \mathcal{F}\setminus G_v$. Since $Id\in \mathcal{F}$ and $y\not\in G_v$, there exists $w \in V\setminus \langle v \rangle$ such that $y(w)=w$ and $B=\{v,w\}$ is a basis of $V$. Observe that $y\in G_w\setminus \{Id\}$, so
    $[y]_B=\begin{bmatrix}
\lambda & 0 \\
c' & 1
\end{bmatrix}$, where $c'\in \mathbb{F}_q$ and $\lambda\in\mathbb{F}^{\ast}_q$.   For $t\in \mathbb{F}_q$, let $w_t=v+tw$. If there exists the  linearly independent vectors  $u,u' \in \{v,w,w_t:t \in \mathbb{F}_q^{\ast}\}$ such that $ \mathcal{F}_0\cap G_{u'}\neq \emptyset $ and  $\emptyset \neq \mathcal{F}_0\cap G_u\subseteq G_{\langle u' \rangle}$,  then Proposition \ref{extra} shows that $\mathcal{F}\subseteq \mathcal{G}_{\langle u' \rangle}$, as desired. This shows that if $|\mathcal{F}|=dq$,  then  $\mathcal{F}=\mathcal{G}_{\langle u'\rangle}$. Next assume that for every  linearly independent vectors $u,u' \in \{v,w,w_t:t \in \mathbb{F}_q^{\ast}\}$, if $\mathcal{F}_0\cap G_u \neq \emptyset$ and $\mathcal{F}_0\cap G_{u'}\neq \emptyset$, then $\mathcal{F}\cap G_u\not\subseteq G_{\langle u' \rangle}$. Thus, Corollary \ref{cor51} and (\ref{union}) show that $|\mathcal{F}|\leq (q+1)(d-1)+1<qd$. So, the theorem follows.
\end{proof}
%+++++++++++++++++++++++++++++++++++++++++++++
\begin{theorem}
Every intersecting set of $G$   is  contained in an intersecting set of size $qd$.
\end{theorem}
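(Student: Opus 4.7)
The plan is to prove a sharper statement: every intersecting set $\mathcal{F}$ of $G$ is contained either in a point stabilizer $G_v$ or in some $H_{\langle u\rangle}$. Both of these have size $qd$ by Lemmas~\ref{stab} and~\ref{repOfH}, so this immediately yields the theorem. By Remark~\ref{rem} I may assume $Id\in\mathcal{F}$; then every $g\in\mathcal{F}_0$ must fix some nonzero vector in order to intersect $Id$.

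The first step is to observe that for $g\in\mathcal{F}_0$ the endomorphism $g-Id$ of $V$ has nontrivial kernel (any vector fixed by $g$ lies in it) and is nonzero (since $g\neq Id$), hence has rank exactly one. Consequently one can write $g-Id=u_g\otimes\phi_g$, where $u_g\in V\setminus\{0\}$ spans the image of $g-Id$ and $\phi_g\colon V\to\mathbb{F}_q$ is a nonzero linear functional whose kernel is exactly the fixed line of $g$. Call $\langle u_g\rangle$ the \emph{displacement line} of $g$ and $\ker\phi_g$ its \emph{fixed line}.

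The crux is the following dichotomy, established by a direct rank-one tensor computation: if $g_1,g_2\in\mathcal{F}_0$ have distinct displacement lines, then they share the same fixed line. Indeed, $g_1$ and $g_2$ intersect iff $(g_1-g_2)(x)=\phi_{g_1}(x)u_{g_1}-\phi_{g_2}(x)u_{g_2}=0$ for some $x\neq 0$; when $u_{g_1}$ and $u_{g_2}$ are linearly independent this forces $\phi_{g_1}(x)=\phi_{g_2}(x)=0$, and $\ker\phi_{g_1}\cap\ker\phi_{g_2}$ contains a nonzero vector only when the two lines coincide.

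With the dichotomy in hand the case split is immediate. If all elements of $\mathcal{F}_0$ share one displacement line $\langle u\rangle$, then $g(v')-v'=(g-Id)(v')\in\langle u\rangle$ for every $g\in\mathcal{F}$ and $v'\in V$, so $\mathcal{F}\subseteq H_{\langle u\rangle}$. Otherwise I pick $g_1,g_2\in\mathcal{F}_0$ with different displacement lines and let $\langle v\rangle$ be their common fixed line; for any third $g_3\in\mathcal{F}_0$, the displacement line of $g_3$ differs from that of at least one of $g_1,g_2$, and the dichotomy applied to that pair forces the fixed line of $g_3$ to be $\langle v\rangle$ too, so $\mathcal{F}\subseteq G_v$. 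The main obstacle is the rank-one tensor computation in the dichotomy; once that is in place the propagation argument in the second case is purely combinatorial and the size bound is immediate from the lemmas already established.
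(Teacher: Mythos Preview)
Your argument is correct and is in fact cleaner than the paper's. The paper proceeds by choosing particular elements $x_0\in\mathcal{F}_0\cap G_v$ and $y_0\in\mathcal{F}_0\cap G_w$, writing them in a fixed basis $B_0=\{v,w\}$, and then invoking Lemma~\ref{new}, Corollary~\ref{ned}, and the case analysis from the proof of Theorem~\ref{erk}; a further change of basis to $B_t=\{v,w_t\}$ and explicit matrix computations are needed to push every element of $\mathcal{F}_0$ into a common $G_{\langle w_\alpha\rangle}$ and hence into $H_{\langle w_\alpha\rangle}$. Your route replaces all of this coordinate work by the single observation that each $g\in\mathcal{F}_0$ satisfies $\mathrm{rank}(g-Id)=1$, hence $g-Id=u_g\otimes\phi_g$, after which the intersecting condition between $g_1$ and $g_2$ reads $\phi_{g_1}(x)u_{g_1}=\phi_{g_2}(x)u_{g_2}$ for some $x\neq 0$ and immediately yields the dichotomy ``different image lines $\Rightarrow$ same kernel line''. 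The propagation step in your second case (any $g_3$ must disagree in displacement with at least one of $g_1,g_2$, hence share their common fixed line) is exactly the combinatorial content hidden inside the paper's repeated appeals to \eqref{51}. What you gain is a coordinate-free, self-contained argument that does not rely on Lemma~\ref{new} or the internal case structure of Theorem~\ref{erk}; what the paper's version offers is continuity with the matrix machinery already set up. Both arrive at the same sharper conclusion: any intersecting set containing $Id$ lies in some $G_v$ or some $H_{\langle u\rangle}$.
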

\begin{proof} Let $\mathcal{F}$ be an intersecting set of $G$ such that $\mathcal{F}\not\subseteq  G_u$, for every $u \in V\setminus \{0\}$.
    By Remark \ref{rem}, we can assume that $Id\in \mathcal{F}$.  Let $x_0\in \mathcal{F}_0$. Since $\mathcal{F}$ is an intersecting set of $G$ and $Id\in \mathcal{F}$,  $x_0(v)=Id(v)$ for some $v\in V\setminus \{0\}$. So, we have  $x_0\in \mathcal{F}_0 \cap G_v$. Note that $\mathcal{F}\not\subseteq  G_v$. This implies that there exists  $y_0\in \mathcal{F}_0\setminus G_ v$. Now,  $y_0\in \mathcal{F}$  and $\mathcal{F}$ is an intersecting set of $G$. Hence, we can find  $w\in V\setminus {\langle v \rangle}$ such that $y_0\in G_w$. Set $w_t=v+tw$, for $t \in \mathbb{F}_q^{\ast}$ and let $B_0=\{v,w\}$.  It is clear that   $B_0$ is a  basis of $V$.
    By Lemma \ref{new},
    \begin{equation}\label{titi} [x_0]_{B_0}=\begin{bmatrix}
1 & c_0 \\
0 & \lambda_0
\end{bmatrix},\end{equation}
where $c_0\in \mathbb{F}_{q}$ and $\lambda_0\in {\rm det}(G)$. If $c_0=0$, then $\mathcal{F}\cap G_{v} \subseteq G_{\langle w\rangle}$, by Corollary \ref{ned}.  It follows from  Proposition   \ref{extra} that $\mathcal{F} \subseteq \mathcal{G}_{\langle w\rangle}$, as desired. If $\lambda_0=1$, then \eqref{51} shows that $\mathcal{F}\cap G_{w} \subseteq G_{\langle v\rangle}$. So, Proposition \ref{extra} guarantees that $\mathcal{F} \subseteq \mathcal{G}_{\langle v\rangle}$, as desired.  Finally let $c_0 \neq 0$ and $\lambda_0\neq 1$.  Then,  $x_0(v+(\lambda_0-1)c_0^{-1}w)=\lambda_0(v+(\lambda_0-1)c_0^{-1}w)$.  Thus, $  x_0 \in G_{\langle w_{\alpha}\rangle}$, where $\alpha=(\lambda_0-1)c_0^{-1}$. If $z \in \mathcal{F}_0\cap G_{v}$, then Lemma \ref{new} shows that
    $[z]_{B_0}=\begin{bmatrix}
1 & c \\
0 & \lambda
\end{bmatrix}$,
where $c\in \mathbb{F}_{q}$ and $\lambda\in {\rm det}(G)$. By \eqref{51} and \eqref{titi}, we can conclude $1+\gamma_0c_0=1+\gamma c$ and $(1-\lambda_0)\gamma_0=(1-\lambda)\gamma$, for some  $\gamma_0,\gamma \in \mathbb{F}_{q}^*$.  Then, we have $z(v+(\lambda-1)c^{-1}w)=\lambda(v+(\lambda-1)c^{-1}w)$. However,  $(\lambda_0-1)c_0^{-1}=(\lambda-1)\gamma\gamma_0^{-1}c^{-1}\gamma^{-1}\gamma_0=(\lambda-1)c^{-1}$.  Hence, $z \in G_{\langle w_{\alpha}\rangle}$. Consequently, $\mathcal{F}_0\cap G_{v} \subseteq G_{\langle w_{\alpha} \rangle}$, where $\alpha =(\lambda_0-1)c_0^{-1}$. Assume that  $t \in \mathbb{F}_q^{\ast}$ and $ x_t \in \mathcal{F}_0\cap G_{w_t} $. Set $B_t=\{v,w_t\}$. Then, $B_t$ is a basis of $V$. One can check at once that $[x_0]_{B_t}=\begin{bmatrix}
1 & 1+tc_0-\lambda_0 \\
0 & \lambda_0
\end{bmatrix}$. So, \eqref{51} shows that $
[x_t]_{B_t}=\begin{bmatrix}
1+(1+tc_0-\lambda_0)\gamma_t & 0 \\
(\lambda_0-1)\gamma_t & 1
\end{bmatrix}$, for some $\gamma_t\in \mathbb{F}_q^{\ast}$.
Also, $v+ \alpha w=(1 -\alpha t^{-1})v+\alpha t^{-1} w_t$ and
$x_t ((1-\alpha t^{-1})v+\alpha t^{-1}
w_t)=(1+(1+tc_0-\lambda_0)\gamma_t)((1-\alpha t^{-1})v+\alpha
t^{-1} w_t)$. Hence, $x_t \in G_{\langle w_{\alpha}\rangle}$.
Thus, $\mathcal{F}_0\cap G_{w_t} \subseteq G_{\langle
w_{\alpha}\rangle}$. The same reasoning shows that
$\mathcal{F}_0\cap G_{w} \subseteq G_{\langle w_{\alpha}\rangle}$.
Thus, $\mathcal{F} \subseteq G_{\langle w_{\alpha} \rangle}$. So,
Proposition \ref{extra1} forces  $\mathcal{F} \subseteq
\mathcal{G}_{\langle  w_{\alpha} \rangle}$, as desired.
\end{proof}
{\bf Acknowledgement}
The author is grateful to Professor Karen Meagher for drawing his attention to the flaw in \cite{Ahanjideh}, and  constructive  comments that improved the paper.  Also, the author sincerely thanks the referee for his/her invaluable comments and suggestions.

\end{document}